\documentclass[12pt, a4paper, leqno]{amsart}
\usepackage[utf8]{inputenc}
\usepackage{amsmath}
\usepackage{amsfonts}
\usepackage{amssymb}
\usepackage{times}
\usepackage[T1]{fontenc} 
\usepackage{url} 
\usepackage{color,esint}
\usepackage[colorlinks]{hyperref} 
\usepackage{mdframed}
\usepackage{pgf,tikz,pgfplots}
\pgfplotsset{compat=1.14}
\usepackage{mathrsfs}
\usetikzlibrary{arrows}

\setlength{\oddsidemargin}{0.5cm}
\setlength{\evensidemargin}{0.5cm}
\setlength{\textwidth}{15.5cm}
\setlength{\topmargin}{0cm}
\setlength{\textheight}{24cm} 
\setlength{\marginparwidth}{2.5cm}
\let\oldmarginpar\marginpar
\renewcommand\marginpar[1]{\-\oldmarginpar[\raggedleft\footnotesize #1]%
{\raggedright\footnotesize #1}}

\usepackage{amsthm}
\theoremstyle{plain}
\newtheorem{thm}[equation]{Theorem}
\newtheorem{lem}[equation]{Lemma}

\newtheorem{cor}[equation]{Corollary}

\theoremstyle{definition}
\newtheorem{defn}[equation]{Definition}

\newtheorem{eg}[equation]{Example}

\theoremstyle{remark}
\newtheorem{rem}[equation]{Remark}

\numberwithin{equation}{section}

\newcommand{\R}{\mathbb{R}}
\newcommand{\N}{\mathbb{N}}
\newcommand{\Rn}{\mathbb{R}^n}

\newcommand{\mphi}[1][\phi]{\operatorname{M_{#1}}}
\newcommand{\acc}{\Gamma_{NAC}}
\newcommand{\zk}{Z_k}
\newcommand{\ok}{\widetilde{\Omega}_k}

\def\le{\leqslant}
\def\leq{\leqslant}
\def\ge{\geqslant}
\def\geq{\geqslant}
\def\rho{\varrho}
\def\vartheta{\theta}
\renewcommand{\phi}{\varphi}
\renewcommand{\epsilon}{\varepsilon}

\def\im{\operatorname{Im}}
\def\M{\operatorname{M}}
\def\mphi{\operatorname{M_{\phi}}}
\def\tmphi{\operatorname{\widetilde{M}_{\phi}}}

\def\adm{F_{adm}}
\def\rect{\Gamma_{rect}}

\date{\today}

\usepackage{url}

\usepackage{color}
\definecolor{blau}{rgb}{0.1,0.0,0.9}

\newcounter{komcounter}
\numberwithin{komcounter}{section}


\begin{document}

\title{Fuglede's theorem in generalized Orlicz--Sobolev spaces}

\author{Jonne Juusti}
\address{Department of Mathematics and Statistics,
FI-20014 University of Turku, Finland}
\email{jthjuu@utu.fi}

\begin{abstract}
In this paper, we show that Orlicz--Sobolev spaces $W^{1,\phi}(\Omega)$ can be characterized with the  ACL- and ACC-characterizations.
ACL stands for absolutely continuous on lines and ACC for absolutely continuous on curves.
Our results hold under the assumptions that $C^1(\Omega)$ functions are dense in $W^{1,\phi}(\Omega)$, and $\phi(x,\beta) \geq  1$ for some $\beta > 0$ and almost every $x \in \Omega$.
The results are new even in the special cases of  Orlicz and double phase growth.
\end{abstract}

\keywords{Generalized Orlicz space, Orlicz--Sobolev space, ACC, ACL, modulus}

\subjclass[2010]{46E35} 

\maketitle


\section{Introduction}

In this paper, we study the ACL- and ACC-characterizations of Orlicz--Sobolev spaces $W^{1,\phi}(\Omega)$, where $\phi$ has generalized Orlicz growth and $\Omega \subset \Rn$ is an open set.
ACL stands for absolutely continuous on lines and ACC for absolutely continuous on curves.
Special cases of Orlicz growth include the constant exponent case $\phi(x,t) = t^p$, the Orlicz case $\phi(x,t) = \phi(t)$, the variable exponent case $\phi(x,t) = t^{p(x)}$, and the double phase case $\phi(x,t) = t^p + a(x) t^q$.
Generalized Orlicz and Orlicz--Sobolev spaces on $\Rn$ have been recently studied for example in \cite{FerHR20, CruH18, YanYY19}, and in a more general setting in \cite{AisAH16, OhnS15}.

The ACL-characterization of the classical constant exponent Sobolev spaces was given by Nikodym in \cite{Nik33}.
It states that a function $u \in L^p(\Omega)$ belongs to $W^{1,p}(\Omega)$ if and only if it has representative $\tilde{u}$ that is absolutely continuous on almost every line segment parallel to the coordinate axes and the classical partial derivatives of $\tilde{u}$ belong to $L^p(\Omega)$.
Moreover the classical partial derivatives are equal to the weak partial derivatives.
In \cite{Fug57}, Fuglede gave a finer version of this characterization, namely, the ACC-characterization.
The ACC-characterization states that a function $u \in L^p(\Omega)$ belongs to $W^{1,p}(\Omega)$ if and only if it has representative $\tilde{u}$ that is absolutely continuous on every rectifiable curve outside a family of zero $p$-modulus and the (classical) partial derivatives $\tilde{u}$ belong to $L^p(\Omega)$.

In \cite{HarHM04}, it was shown that variable exponent Sobolev space $W^{1,p(\cdot)}(\Omega)$ also has the ACL- and ACC-characterizations, if the exponent satisfies suitable conditions and $C^1(\Omega)$ functions are dense.
In Section 8 of \cite{OhnS15}, it was shown that the results hold in the space $W^{1,\phi}(\Rn)$, if $C^1(\Rn)$-functions are dense and $\phi$ satisfies certain conditions.
In this paper, we generalize the results even further.
We show that the results hold for the space $W^{1,\phi}(\Omega)$, and we do so using fewer assumptions than in \cite{HarHM04} or \cite{OhnS15}.
There are two assumptions we need to make: First that $C^1(\Omega)$ functions are dense in $W^{1,\phi}(\Omega)$. And second, that $\phi(x,\beta) \geq  1$ for some $\beta > 0$ and almost every $x \in \Omega$.
To best of our knowledge, the results are new even in the special cases of  Orlicz and double phase growth.

We base our approach on \cite{HarHM04}, but make some modifications to both make the results more general and simplify some of the results.
One difference is that we use a slightly different definition for the modulus of a curve family.
Our definition of is based on the norm, while the definition in \cite{HarHM04} is based on the modular.
The reason for defining the modulus differently has to do with the fact that modular convergence is a weaker concept than norm convergence.
Another difference with \cite{HarHM04} is that we do not use the theory of capacities.
This has two advantages:
First, the use of capacities would force us to make some extra assumptions on $\phi$.
Second, we can prove our results directly in $W^{1,\phi}(\Omega)$, for any $\Omega \subset \Rn$, whereas in \cite{HarHM04} the results are first proven in the case $\Omega = \Rn$, and this case is then used to prove the results for $\Omega \subset \Rn$.

The structure of this paper is as follows:
Section \ref{sec:preliminaries} covers preliminaries about generalized Orlicz and Orlicz--Sobolev spaces.
In section \ref{sec:modulus} we define and discuss the modulus of a curve family.
In section \ref{seq:fugLem} we prove two lemmas, which we will need in order to prove our main results.
In section \ref{sec:fugThm} we prove our main results, the ACL- and ACC-characterizations of $W^{1,\phi}(\Omega)$.


\section{Preliminaries}
\label{sec:preliminaries} 

Throughout this paper, we assume that $\Omega \subset \Rn$ is an open set.
The following definitions are as in \cite{HarH19b}, which we use as a general reference to background theory in generalized Orlicz spaces.

\begin{defn}
We say that $\phi: \Omega\times [0, \infty) \to [0, \infty]$ is a 
\textit{weak $\Phi$-function}, and write $\phi \in \Phi_w(\Omega)$, if 
the following conditions hold
\begin{itemize}
\item For every measurable $f: \Omega \to [-\infty, \infty]$ the function $x \mapsto \phi(x, |f|)$ is measurable, and for every $x \in \Omega$ the function $t \mapsto \phi(x, t)$ is non-decreasing.
\item $\displaystyle \phi(x, 0) = \lim_{t \to 0^+} \phi(x,t) =0$  and $\displaystyle \lim_{t \to \infty}\phi(x,t)=\infty$ for every $x\in \Omega$.
\item The function $t \mapsto \frac{\phi(x, t)}t$ is 
$L$-almost increasing for $t>0$ uniformly in $\Omega$. "Uniformly" means that $L$ 
is independent of $x$.
\end{itemize}
If $\phi\in\Phi_w(\Omega)$ is additionally convex and left-continuous, then $\phi$ is a 
\textit{convex $\Phi$-function}, and we write $\phi \in \Phi_c(\Omega)$.
\end{defn}

Two functions $\phi$ and $\psi$ are \textit{equivalent}, 
$\phi\simeq\psi$, if there exists $L\ge 1$ such that 
$\psi(x,\frac tL)\le \phi(x, t)\le \psi(x, Lt)$ for every $x \in \Omega$ and every $t>0$.
Equivalent $\Phi$-functions give rise to the same space with 
comparable norms.

We define the left-inverse of $\phi$ by setting
\[
\phi^{-1}(x,\tau) := \inf\{t \geq 0 : \phi(x,t) \geq \tau\}.
\]

\subsection*{Assumptions}\label{sec:assumptions}

Let us write $\phi^+_B (t) := \sup_{x \in B} \phi(x, t)$ and $\phi^-_B (t) := \inf_{x \in B} \phi(x, t)$; and abbreviate $\phi^\pm := \phi^\pm_\Omega$.
We state some assumptions for later reference. 

\begin{itemize}
\item[(A0)]
There exists $\beta \in(0,1)$ such that $\phi(x, \beta) \le 1 \le \phi(x,1/\beta)$ for almost every $x$.
\item[(A1)]\label{defn:a1}
There exists $\beta\in (0,1)$ such that,
for every ball $B$ and a.e.\ $x,y\in B \cap \Omega$,
\[
\beta \phi^{-1}(x, t) \le \phi^{-1} (y, t) 
\quad\text{when}\quad 
t \in \bigg[1, \frac{1}{|B|}\bigg].
\]
\item[(A2)]
For every $s > 0$ there exist $\beta \in (0, 1]$ and $h \in L^1(\Omega) \cap L^\infty (\Omega)$ such that
\[
\beta \phi^{-1}(x,t) \leq \phi^{-1}(y,t)
\]
for almost every $x, y \in \Omega$ and every $t \in [h(x) + h(y), s]$.
\item[(aInc)$_p$] 
There exist $L\ge 1$ such that $t \mapsto \frac{\phi(x,t)}{t^{p}} $ is $L$-almost increasing in $(0,\infty)$.
\item[(aDec)$_q$] 
There exist $L\ge 1$ such that $t \mapsto \frac{\phi(x,t)}{t^{q}} $ is $L$-almost decreasing in $(0,\infty)$.
\end{itemize}
We say that $\phi$ satisfies (aInc), if it satisfies (aInc)$_p$ for some $p>1$.
Similarly,  $\phi$ satisfies (aDec), if it satisfies (aDec)$_q$ for some $q>1$.
We write (Inc) if the ratio is increasing rather than just almost increasing, similarly for (Dec).
See \cite[Table~7.1]{HarH19b} for an interpretation of the assumptions in some special cases.

\subsection*{Generalized Orlicz spaces}\label{sec:genOrlicz}

We recall some definitions. 
We denote by $L^0(\Omega)$ the set of measurable 
functions in $\Omega$.  

\begin{defn}\label{def:Lphi}
Let $\phi \in \Phi_w(\Omega)$ and define the \textit{modular} 
$\varrho_\phi$ for $f\in L^0(\Omega)$ by 
\[
\varrho_\phi(f) 
:= \int_\Omega \phi(x, |f(x)|)\,dx.
\]
The \emph{generalized Orlicz space},
also called Musielak--Orlicz space,
is defined as the set 
\[
 L^\phi(\Omega)
 := \big\{f \in L^0(\Omega) \colon \lim_{\lambda \to 0^+} \varrho_\phi(\lambda f) = 0\big\}
\]
equipped with the (Luxemburg) norm 
\begin{equation}\label{def:norm}
\|f\|_{L^\phi(\Omega)}
:= \inf \Big\{ \lambda>0 \colon \varrho_\phi\Big(\frac{f}{\lambda}  \Big) \leq 1\Big\}.
\end{equation}
If the set is clear from the context we abbreviate $\|f\|_{L^\phi(\Omega)}$ by $\|f\|_{\phi}$.
\end{defn}

The following lemma is a direct consequence of the proof of \cite[Theorem~3.3.7]{HarH19b}.

\begin{lem}\label{lem:pointwiseCauchy}
If $(f_i)$ is a Cauchy sequence in $L^\phi(\Omega)$ such that the pointwise limit $f(x) := \lim_{i\to\infty} f_i(x)$ ($\pm\infty$ allowed) exists for almost every $x \in \Omega$, then $f$ is the limit of $(f_i)$ in $L^\phi(\Omega)$.
\end{lem}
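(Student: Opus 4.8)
The plan is to deduce this from the completeness of $L^\phi(\Omega)$, established in \cite[Theorem~3.3.7]{HarH19b}, combined with the fact that the argument there produces the limit as an almost everywhere pointwise limit of a suitable subsequence. Since $(f_i)$ is Cauchy in the complete space $L^\phi(\Omega)$, there is some $g \in L^\phi(\Omega)$ with $\|f_i - g\|_\phi \to 0$; the task is then to identify $g$ with $f$.

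First I would pass to a subsequence $(f_{i_k})$ with $\sum_k \|f_{i_{k+1}} - f_{i_k}\|_\phi < \infty$, exactly as in the proof of \cite[Theorem~3.3.7]{HarH19b}. There one dominates the telescoping series $f_{i_1} + \sum_k (f_{i_{k+1}} - f_{i_k})$ by a function in $L^\phi(\Omega)$, concludes that its partial sums converge almost everywhere, and shows that this a.e.\ limit coincides with the norm limit $g$. In particular $(f_{i_k})$ converges to $g$ pointwise almost everywhere. (Equivalently, one could argue that norm convergence in $L^\phi(\Omega)$ forces convergence in measure on every subset of finite measure, and extract an a.e.\ convergent subsequence that way.)

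Second, by hypothesis the full sequence $(f_i)$ converges pointwise a.e.\ to $f$, hence so does its subsequence $(f_{i_k})$. Uniqueness of pointwise limits then gives $f = g$ almost everywhere, so $f \in L^\phi(\Omega)$ and $\|f_i - f\|_\phi \to 0$, which is the claim.

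The only point demanding care is the passage from norm convergence to an almost everywhere convergent subsequence in the general $\Phi_w(\Omega)$ setting; this is precisely why the statement is phrased as a consequence of the \emph{proof} rather than the statement of \cite[Theorem~3.3.7]{HarH19b}, since the limit there is constructed pointwise. Once that subsequence is in hand, the remaining steps are routine.
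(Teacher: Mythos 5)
Your argument is correct and is exactly what the paper intends: the paper gives no separate proof but states that the lemma is a direct consequence of the proof of \cite[Theorem~3.3.7]{HarH19b}, namely that the completeness argument there constructs the norm limit as an almost everywhere pointwise limit of a subsequence, after which uniqueness of a.e.\ limits identifies it with $f$. No further comment is needed.
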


H\"older's inequality holds in generalized Orlicz spaces with constant $2$, without restrictions on the $\Phi_w$-function
\cite[Lemma~3.2.13]{HarH19b}:
\[
\int_\Omega |f|\, |g|\, dx \le 2 \|f\|_\phi \|g\|_{\phi^*}.
\]

\begin{defn}
A function $u \in L^\phi(\Omega)$ belongs to the
\textit{Orlicz--Sobolev space $W^{1, \phi}(\Omega)$} if its weak partial derivatives $\partial_1 u, \ldots, \partial_n u$ exist and belong to the space $L^{\phi}(\Omega)$.
For $u \in W^{1,\phi}(\Omega)$, we define the norm
\[
\| u \|_{W^{1,\phi}(\Omega)} := \| u \|_{\phi} + \| \nabla u \|_{\phi}.
\]
Here $\| \nabla u \|_{\phi}$ is short for $\big{\|} | \nabla u | \big{\|}_{\phi}$.
Again, if $\Omega$ is clear from the context, we abbreviate $\| u \|_{W^{1,\phi}(\Omega)}$ by $\| u \|_{1,\phi}$.
\end{defn}

Many of our results need the assumption that $C^1(\Omega)$-functions are dense in $W^{1,\phi}(\Omega)$.
A suuficient condition is given by \cite[Theorem~6.4.7]{HarH19b}, which states that $C^\infty(\Omega)$-functions are dense in $W^{1,\phi}(\Omega)$, if $\phi$ satisfies (A0), (A1), (A2) and (aDec).
By \cite[Lemma~4.2.3]{HarH19b}, (A2) can be omitted, if $\Omega$ is bounded.


\section{Modulus of a family of curves}\label{sec:modulus}

By a curve, we mean any continuous function $\gamma: I \to \Rn$, where  $I = [a,b]$ is a closed interval.
If a curve $\gamma$ is rectifiable, we may assume that $I = [0,\ell(\gamma)]$, where $\ell(\gamma)$ denotes the length of $\gamma$. 
We denote the image of $\gamma$ by $\im(\gamma)$, and by $\rect(\Omega)$ we denote the family of all rectifiable curves $\gamma$ such that $\im(\gamma) \subset \Omega$.
Let $\Gamma \subset \rect(\Omega)$.
We say that a Borel function $u: \Omega \to [0,\infty]$ is \emph{$\Gamma$-admissible}, if
\[
\int_\gamma u \,ds
\geq 1
\]
for all $\gamma \in \Gamma$, where $ds$ denotes the integral with respect to curve length.
We denote the set of all $\Gamma$-admissible functions by $\adm(\Gamma)$.

\begin{defn}\label{def:modulus}
Let $\Gamma \subset \rect(\Omega)$.
Let $\phi \in \Phi_w(\Omega)$.
We define the \emph{$\phi$-modulus} of $\Gamma$ by
\[
\mphi(\Gamma)
:= \inf_{u\in \adm(\Gamma)} \|u\|_\phi.
\]
If $\adm(\Gamma) = \emptyset$, we set $\mphi(\Gamma) := \infty$.
A family of curves $\Gamma$ is \emph{exceptional}, if $\mphi(\Gamma) = 0$.
\end{defn}

The definition above is as in \cite{Luk16}.
The following lemma gives some useful properties of the modulus.
Items (a) and (b) are items (a) and (c) of \cite[Lemma~4.5]{Luk16}, and item (c) is a consequence of \cite[Proposition~4.8]{Luk16}.
To use the lemma, we must check that $L^{\phi}(\Omega)$ satisfies conditions (P0), (P1), (P2) and (RF) stated at the beginning of section 2 in \cite{Luk16}.
The conditions (P0), (P1) and (P2) are easy to check.
For (RF) to hold, there must exists $c \geq 1$ such that
\[
\Big{\|} \sum_{i=1}^\infty u_i \Big{\|}_\phi
\leq \sum_{i=1}^\infty c^{i}\|u_i\|_\phi
\]
holds for non-negative $u_i \in L^\phi(\Omega)$.
This is an easy consequence of \cite[Lemma~3.2.5]{HarH19b}, which states that there exists $c \geq 1$ such that
\[
\Big{\|} \sum_{i=1}^\infty u_i \Big{\|}_\phi
\leq c\sum_{i=1}^\infty \|u_i\|_\phi.
\]

\begin{lem}\label{lem:modulus}
Let $\phi \in \Phi_w(\Omega)$, then the $\phi$-modulus has the following properties:
\begin{itemize}
\item[(a)] if $\Gamma_1 \subset \Gamma_2$, then $\mphi(\Gamma_1) \leq \mphi(\Gamma_2)$,
\item[(b)] if $\mphi(\Gamma_i) = 0$ for every $i \in \N$, then $\mphi(\bigcup_{i=1}^\infty\Gamma_i) = 0$.
\item[(c)] $\mphi(\Gamma) = 0$ if and only if there exists a non-negative Borel function $u \in L^\phi(\Omega)$ such that $\int_\gamma u \,ds = \infty$ for every $\gamma \in \Gamma$.
\end{itemize}
\end{lem}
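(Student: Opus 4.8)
The statement is essentially a transcription: all three items are recorded in \cite{Luk16} once one knows that $L^\phi(\Omega)$ satisfies the structural axioms (P0), (P1), (P2) and (RF) used there, and the paragraph preceding the lemma has just verified these — the only nonroutine one, (RF), being immediate from \cite[Lemma~3.2.5]{HarH19b}. So the shortest route is to cite \cite[Lemma~4.5]{Luk16} for (a) and (b) and \cite[Proposition~4.8]{Luk16} for (c). For completeness I would also indicate the direct arguments, which are short and rest only on the ``Riesz--Fischer''-type inequality $\|\sum_i u_i\|_\phi \le c\sum_i\|u_i\|_\phi$ for non-negative $u_i$.

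For (a) I would note that $\Gamma_1\subset\Gamma_2$ forces $\adm(\Gamma_2)\subset\adm(\Gamma_1)$, since the condition $\int_\gamma u\,ds\ge 1$ imposed for all $\gamma\in\Gamma_2$ is in particular imposed for all $\gamma\in\Gamma_1$; the infimum defining $\mphi(\Gamma_1)$ is thus taken over a larger set and cannot exceed $\mphi(\Gamma_2)$. For (b), fix $\epsilon>0$, choose $u_i\in\adm(\Gamma_i)$ with $\|u_i\|_\phi\le 2^{-i}\epsilon/c$, and set $u:=\sum_i u_i$. Then $u$ is a non-negative Borel function; it is admissible for $\bigcup_i\Gamma_i$ because any curve in that union lies in some $\Gamma_j$, whence $\int_\gamma u\,ds\ge\int_\gamma u_j\,ds\ge 1$; and $\|u\|_\phi\le c\sum_i\|u_i\|_\phi\le\epsilon$. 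Letting $\epsilon\to 0^+$ gives $\mphi(\bigcup_i\Gamma_i)=0$.

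For (c), the implication from right to left: if $u\in L^\phi(\Omega)$ is non-negative and Borel with $\int_\gamma u\,ds=\infty$ for every $\gamma\in\Gamma$, then $\lambda u\in\adm(\Gamma)$ for every $\lambda>0$, while $\|\lambda u\|_\phi=\lambda\|u\|_\phi\to 0$, so $\mphi(\Gamma)=0$. Conversely, if $\mphi(\Gamma)=0$, pick $u_i\in\adm(\Gamma)$ with $\|u_i\|_\phi\le 2^{-i}/c$ and put $u:=\sum_i u_i$; then $u$ is non-negative and Borel, $\|u\|_\phi\le c\sum_i 2^{-i}/c<\infty$ (so $u\in L^\phi(\Omega)$), and $\int_\gamma u\,ds=\sum_i\int_\gamma u_i\,ds\ge\sum_i 1=\infty$ for every $\gamma\in\Gamma$ by monotone convergence along the arc-length parametrization.

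There is no genuine obstacle here; the two points that merit a line of justification are purely technical: that the monotone convergence theorem applies to the arc-length integral $\int_\gamma(\,\cdot\,)\,ds$ of the series $\sum_i u_i$ along a rectifiable curve, so the interchange of sum and integral in (b) and (c) is legitimate; and that a non-negative Borel function of finite Luxemburg quasinorm belongs to $L^\phi(\Omega)$, which is needed so that the $u$ produced in the converse half of (c) is genuinely an element of $L^\phi(\Omega)$. Both are standard, and they are exactly what is packaged into the axioms (P0), (P1), (P2) and (RF) when one instead argues through the abstract framework of \cite{Luk16}.
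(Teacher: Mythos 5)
Your proposal is correct and follows the same route as the paper, which likewise proves this lemma by citing \cite[Lemma~4.5]{Luk16} for (a) and (b) and \cite[Proposition~4.8]{Luk16} for (c) after verifying the axioms (P0), (P1), (P2) and (RF), with (RF) coming from \cite[Lemma~3.2.5]{HarH19b}. The direct arguments you supply in addition are sound and simply make the lemma self-contained, which the paper does not bother to do.
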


In \cite{Fug57}, the $L^p$-modulus was originally defined by
\[
\M_p(\Gamma)
:= \inf_{u \in \adm(\Gamma)} \int_\Omega u^p \,dx. 
\]
This differs from Definition \ref{def:modulus} in that the infimum is taken over the modulars of admissible functions instead of their norms.
A similar approach was taken in the variable exponent case in \cite{HarHM04}.
Following the original approach, we could have defined the modulus by
\[
\tmphi(\Gamma)
:= \inf_{u\in \adm(\Gamma)} \int_{\Omega} \phi(x, u(x)) \,dx.
\]
In the case $\phi(x,t) = t^p$, where $1\leq p<\infty$, we have $\tmphi(\Gamma) = \mphi(\Gamma)^p$.
Thus in this special case $\tmphi(\Gamma) = 0$ if and only if $\mphi(\Gamma) = 0$.
Since we are only interested in whether a family of curves is exceptional or not, in this case it does not matter whether we use $\mphi$ or $\tmphi$.

In the general case, the situation is somewhat more complicated.
Let $\phi \in \Phi_w(\Omega)$.
By \cite[Corollary~3.2.8]{HarH19b}, if $\|u\|_\phi < 1$, then $\rho_\phi(u) \lesssim \| u\|_\phi$.
Thus $\mphi(\Gamma) = 0$ implies $\tmphi(\Gamma) = 0$.
The converse implication does not necessarily hold, as the next example shows, which is the main reason for using norms instead of modulars in Definition \ref{def:modulus}.

\begin{eg}
Define $\phi \in \Phi_w(\R^2)$ by
\[
\phi(x,t) := \left\{\begin{array}{ll}
0 & \text{ if }t\leq 1, \\
t-1 & \text{ if }t>1.
\end{array}\right.
\]
For $y \in [0,1]$, let $\gamma_y:[0,1]\to\R^2,z\mapsto (y,z)$, and let $\Gamma := \{\gamma_y: y \in [0,1]\}$.
Let $u = 1$ everywhere.
Then
\[
\int_\gamma u(s) \,ds
= 1
\]
for every $\gamma \in \Gamma$, and therefore $u \in \adm(\Gamma)$.
Since $\phi(x,u(x)) = 0$ for every $x \in \R^2$, we have $\rho_\phi(u) = 0$, and thus $\tmphi(\Gamma) = 0$.

To show that $\mphi(\Gamma) > 0$, suppose on the contrary, that $\mphi(\Gamma) = 0$.
Then by Lemma \ref{lem:modulus} (c) there exists some $v \in L^\phi(\R^2)$ such that $\int_\gamma v \,ds = \infty$ for every $\gamma \in \Gamma$.
Thus
\[
\int_{[0,1]} v(y,z) \,dz
= \int_{\gamma_y} v \,ds
= \infty
\]
for every $y \in [0,1]$.
Let $\lambda > 0$.
Since $\phi(x,t) \geq t-1$ for every $x \in \R^2$ and every $t \geq 0$, Fubini's theorem implies that
\[
\int_{\R^2} \phi(x,\lambda v(x)) \,dx
\geq \int_{[0,1]}\int_{[0,1]} \lambda v(y,z) - 1 \,dz\,dy
= \infty -\int_{[0,1]}\int_{[0,1]} 1 \,dz\,dy
= \infty
\]
Since $\lambda > 0$ was arbitrary, it follows by \eqref{def:norm} that $\| v\|_\phi = \infty$.
But this is impossible, since $v \in L^\phi(\R^2)$.
Thus the assumption that $\mphi(\Gamma) = 0$ must be wrong and $\mphi(\Gamma) > 0$.
\end{eg}

Note that if $\phi \in \Phi_w(\Omega)$ satisfies (aDec)$_q$ for $1\leq q<\infty$, then, by \cite[Lemma~3.2.9]{HarH19b} (since $\phi$ satisfies (aInc)$_1$ by definition) we have
\[
\|u\|_\phi
\lesssim \max\{\rho_\phi(u),\rho_\phi(u)^{\frac{1}{q}}\}.
\]
Thus, if $\phi$ satisfies (aDec), then $\tmphi(\Gamma) = 0$ if and only if $\mphi(\Gamma) = 0$.


\section{Fuglede's Lemma}\label{seq:fugLem}

\begin{lem}[Fuglede's lemma]\label{lem:Fuglede}
Let $\phi\in\Phi_w(\Omega)$, and let $(u_i)$ be a sequence of non-negative Borel functions converging to zero in $L^\phi(\Omega)$.
Then there exists a subsequence $(u_{i_k})$ and an exceptional set $\Gamma \subset \rect(\Omega)$ such that for all $\gamma\notin\Gamma$ we have
\[
\lim_{k\to\infty} \int_\gamma u_{i_k} \,ds
= 0.
\]
\end{lem}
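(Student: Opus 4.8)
The plan is to mimic the classical Fuglede argument, replacing the $L^p$-modular by the norm and invoking Lemma~\ref{lem:modulus}(c) at the crucial point. First I would pass to a rapidly converging subsequence: since $u_i \to 0$ in $L^\phi(\Omega)$, I can choose indices $i_k$ so that $\|u_{i_k}\|_\phi \leq 2^{-k}$ (more precisely, $\leq c^{-k}2^{-k}$ where $c\geq 1$ is the constant from \cite[Lemma~3.2.5]{HarH19b}, i.e.\ the (RF)-type constant, so that the sum below converges). Then I would set $v := \sum_{k=1}^\infty u_{i_k}$, a non-negative Borel function (a countable sum of Borel functions, hence Borel). By the estimate $\big\| \sum_i u_i \big\|_\phi \leq \sum_i c^i \|u_i\|_\phi$ cited just before Lemma~\ref{lem:modulus}, the choice $\|u_{i_k}\|_\phi \leq (2c)^{-k}$ gives $\|v\|_\phi \leq \sum_k c^k (2c)^{-k} = \sum_k 2^{-k} < \infty$, so $v \in L^\phi(\Omega)$.

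Next I would define the exceptional family. Let $\Gamma := \{\gamma \in \rect(\Omega) : \int_\gamma v\,ds = \infty\}$. By Lemma~\ref{lem:modulus}(c), the existence of the non-negative Borel function $v \in L^\phi(\Omega)$ with $\int_\gamma v\,ds = \infty$ for all $\gamma \in \Gamma$ shows exactly that $\mphi(\Gamma) = 0$, i.e.\ $\Gamma$ is exceptional. Now for any $\gamma \notin \Gamma$ we have $\int_\gamma v\,ds < \infty$. Since the partial sums $\sum_{k=1}^N u_{i_k}$ increase pointwise to $v$ and each $u_{i_k}\geq 0$, the monotone convergence theorem (applied to the length measure on $\gamma$) gives $\sum_{k=1}^\infty \int_\gamma u_{i_k}\,ds = \int_\gamma v\,ds < \infty$. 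A convergent series of non-negative terms has terms tending to zero, so $\int_\gamma u_{i_k}\,ds \to 0$ as $k\to\infty$, which is the claim.

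The one point needing a little care — and the place I'd expect the only genuine subtlety — is the interchange of sum and line integral and the Borel measurability of $v$: I should make sure $\int_\gamma v\,ds$ is well-defined for every rectifiable $\gamma$, which is fine because $v$ is Borel and non-negative, so the integral against arclength measure is a well-defined element of $[0,\infty]$, and Tonelli/monotone convergence legitimately swaps $\sum_k$ and $\int_\gamma$. Everything else is bookkeeping: the only structural input beyond elementary measure theory is Lemma~\ref{lem:modulus}(c) (to identify $\Gamma$ as exceptional) and the norm estimate for countable sums (to guarantee $v \in L^\phi(\Omega)$), both of which are available. Note this argument never uses (aDec) or any of the (A0)--(A2) conditions, consistent with the hypotheses of the lemma.
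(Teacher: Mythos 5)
Your proposal is correct and follows essentially the same route as the paper: pass to a rapidly convergent subsequence, sum it to obtain a non-negative Borel function $v\in L^\phi(\Omega)$ via the countable-sum norm inequality, and observe that $\int_\gamma u_{i_k}\,ds\to 0$ whenever $\int_\gamma v\,ds<\infty$. The only cosmetic differences are that the paper establishes $v\in L^\phi(\Omega)$ by showing the partial sums form a Cauchy sequence and invoking Lemma~\ref{lem:pointwiseCauchy}, and verifies exceptionality directly by noting $v/m$ is admissible rather than citing Lemma~\ref{lem:modulus}(c); both variants are sound.
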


\begin{proof}
Let $(v_k) := (u_{i_k})$ be a subsequence of $(u_i)$, such that
\[
\| v_k\|_\phi
\leq 2^{-k}.
\]
Let $\Gamma \subset \rect(\Omega)$ be the family of curves $\gamma$, such that $\int_\gamma v_k  \,ds \nrightarrow 0$ as $k \to \infty$.
For every $j \in \N$, let
\[
w_j
:= \sum_{k=1}^j v_k.
\]
Since every $v_k$ is a non-negative Borel function, every $w_j$ is also a non-negative Borel function.
Since the sequence $(w_j(x))$ is increasing for every $x\in\Omega$, the limit $w(x) := \lim_{j\to\infty}w_j(x)$ (possibly $\infty$) exists.
By \cite[Corollary~3.2.5]{HarH19b}, if $j<m$, then
\[
\|w_m-w_j\|_\phi
=\Big{\|} \sum_{k=j+1}^m v_k \Big{\|}_\phi
\lesssim \sum_{k=j+1}^m \| v_k \|_\phi
\leq \sum_{k=j+1}^m 2^{-k}
< 2^{-j},
\]
which implies that $(w_j)$ is a Cauchy sequence in $L^\phi(\Omega)$.
By Lemma \ref{lem:pointwiseCauchy}, $w$ is the limit of $(w_j)$ in $L^\phi(\Omega)$, which implies that $w \in L^\phi(\Omega)$, and therefore $\| w \|_\phi < \infty$.

Suppose now that $\gamma \in \Gamma$.
Then
\[
\int_\gamma w \,ds
= \sum_{k=1}^\infty \int_\gamma v_k \,ds
= \infty,
\]
because $\sum_{k=1}^\infty \int_\gamma v_k \,ds < \infty$ would imply that $\lim_{k\to\infty}\int_\gamma v_k \,ds = 0$.
Thus $w/m$ is $\Gamma$-admissible for every $m\in\N$.
Since $\lim_{m\to\infty} \|w/m\|_\phi = \lim_{m\to\infty} \|w\|_\phi / m = 0$, we have $\mphi(\Gamma) = 0$.
\end{proof}

Let $E \subset \Omega$.
We denote by $\Gamma_E$ the set of all curves $\gamma \in \rect(\Omega)$, such that the $E \cap \im(\gamma)$ is nonempty.

The next lemma is, in a sense, a combination of \cite[Lemma~3.1]{HarHM04} and \cite[Lemma~5.1]{BarHH18}.
The former of the aforementioned lemmas states that if $C^1(\Rn)$ functions are dense in the variable exponent Sobolev space $W^{1,p(\cdot)}(\Rn)$ and $1 < p^{-}\leq p^+ < \infty$, then $\Gamma_E$ is exceptional whenever $E \subset \Rn$ is of capacity zero.
The latter states that if $\phi \in \Phi_w(\Rn)$ satisfies (aInc) and (aDec), then for every Cauchy sequence in $C(\Rn) \cap W^{1,\phi}(\Rn)$ there exists a subsequence which converges pointwise outside a set of zero capacity.
The beginning of the proof of our lemma is similar to \cite[Lemma~5.1]{BarHH18},  but then we use the ideas from \cite[Lemma~3.1]{HarHM04} and modify the proof to replace convergence outside a set of capacity zero by convergence outside a set $E$, such that $\Gamma_E$ is exceptional.
The reason that we do not simply prove a direct generalization of \cite[Lemma~3.1]{HarHM04} and then use \cite[Lemma~5.1]{BarHH18} is, that our proof avoids the use of capacities.
This has two advantages: First, we can drop the assumptions (aInc) and (aDec).
And second, our new result works in $W^{1,\phi}(\Omega)$ for any $\Omega \subset \Rn$, while in \cite[Lemma~3.1]{HarHM04} and \cite[Lemma~5.1]{BarHH18} we have $\Omega = \Rn$.
 
\begin{lem}\label{lem:zeroModMeas}
Let $\phi \in \Phi_w(\Omega)$ and let $(u_i)$ be a Cauchy sequence of functions in $C^1(\Omega)\cap W^{1,\phi}(\Omega)$.
Then there exists a set $E$ and a subsequence $(u_{i_k})$ such that $\mphi(\Gamma_E) = |E| = 0$ and $(u_{i_k})$ converges pointwise everywhere outside $E$.
\end{lem}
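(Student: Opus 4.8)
The plan is to adapt the telescoping argument in the proof of Lemma~\ref{lem:Fuglede} and to combine it with the elementary fact — read off from Lemma~\ref{lem:modulus}(c) — that every rectifiable curve contained in a Lebesgue-null Borel set belongs to an exceptional family. First I would pass to a subsequence $(v_k):=(u_{i_k})$ with $\|v_{k+1}-v_k\|_{1,\phi}\le 2^{-k}$ and set $g_k:=v_{k+1}-v_k$, so that $g_k\in C^1(\Omega)\cap W^{1,\phi}(\Omega)$ and $\|g_k\|_\phi\le 2^{-k}$, $\|\nabla g_k\|_\phi\le 2^{-k}$. Exactly as in the proof of Lemma~\ref{lem:Fuglede}, the partial sums $\sum_{k=1}^{m}|\nabla g_k|$ and $\sum_{k=1}^{m}|g_k|$ are Cauchy in $L^\phi(\Omega)$ and increase pointwise, so by Lemma~\ref{lem:pointwiseCauchy} the limits
\[
h:=\sum_{k=1}^{\infty}|\nabla g_k|\qquad\text{and}\qquad \widetilde h:=\sum_{k=1}^{\infty}|g_k|
\]
belong to $L^\phi(\Omega)$. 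They are non-negative Borel functions (being increasing limits of continuous functions), and since $\phi(x,t)\to\infty$ as $t\to\infty$ we have $\widetilde h<\infty$ almost everywhere. I would then set
\[
E:=\Big\{x\in\Omega:\ \widetilde h(x)=\sum_{k=1}^{\infty}|g_k(x)|=\infty\Big\},
\]
a Borel set (indeed a $G_\delta$, as a superlevel set of the lower semicontinuous function $\widetilde h$) with $|E|=0$. For $x\notin E$ the series $\sum_k g_k(x)$ converges absolutely, so $v_k(x)=v_1(x)+\sum_{j=1}^{k-1}g_j(x)$ converges; hence $(u_{i_k})$ converges pointwise everywhere on $\Omega\setminus E$, and it remains only to prove $\mphi(\Gamma_E)=0$.

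To this end I would introduce $\Gamma_1:=\{\gamma\in\rect(\Omega):\int_\gamma h\,ds=\infty\}$ and $\Gamma_2:=\{\gamma\in\rect(\Omega):\int_\gamma\infty\,\chi_E\,ds=\infty\}$. Since $h$ is a non-negative Borel function in $L^\phi(\Omega)$, Lemma~\ref{lem:modulus}(c) gives $\mphi(\Gamma_1)=0$; and since $\infty\,\chi_E$ is a non-negative Borel function vanishing almost everywhere (as $|E|=0$), it too lies in $L^\phi(\Omega)$, so Lemma~\ref{lem:modulus}(c) gives $\mphi(\Gamma_2)=0$. By Lemma~\ref{lem:modulus}(a),(b) it thus suffices to prove the inclusion $\Gamma_E\subset\Gamma_1\cup\Gamma_2$; this is the heart of the argument and the step I expect to be the main obstacle. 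So let $\gamma\in\Gamma_E\setminus\Gamma_1$ and fix $x_1\in\im(\gamma)\cap E$. Parametrizing $\gamma$ by arc length, each $g_k\circ\gamma$ is Lipschitz — because $g_k\in C^1(\Omega)$, $\im(\gamma)$ is a compact subset of $\Omega$, and $\gamma$ is $1$-Lipschitz — so the fundamental theorem of calculus gives
\[
|g_k(x)-g_k(y)|\le\int_\gamma|\nabla g_k|\,ds\qquad\text{for all }x,y\in\im(\gamma).
\]
Since $\gamma\notin\Gamma_1$, monotone convergence yields $\sum_k\int_\gamma|\nabla g_k|\,ds=\int_\gamma h\,ds<\infty$, so for every $y\in\im(\gamma)$ and every $N\in\N$
\[
\sum_{k=1}^{N}|g_k(y)|\ \ge\ \sum_{k=1}^{N}|g_k(x_1)|-\sum_{k=1}^{N}|g_k(x_1)-g_k(y)|\ \ge\ \sum_{k=1}^{N}|g_k(x_1)|-\int_\gamma h\,ds .
\]
Letting $N\to\infty$ and using that $\sum_k|g_k(x_1)|=\infty$ (because $x_1\in E$), we get $\widetilde h(y)=\sum_k|g_k(y)|=\infty$, i.e.\ $y\in E$. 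Hence $\im(\gamma)\subset E$, so $\int_\gamma\infty\,\chi_E\,ds=\infty$ and $\gamma\in\Gamma_2$ (here $\ell(\gamma)>0$, as rectifiable curves are taken to be non-constant). This proves $\Gamma_E\subset\Gamma_1\cup\Gamma_2$ and therefore $\mphi(\Gamma_E)=0$, completing the argument.

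The only points that need genuine care are the membership $h,\widetilde h\in L^\phi(\Omega)$ — handled verbatim as in the proof of Lemma~\ref{lem:Fuglede} — and the arc-length estimate for $g_k\circ\gamma$, which is a routine consequence of $g_k$ being $C^1$ on a neighborhood of the compact set $\im(\gamma)$ together with $\gamma$ being $1$-Lipschitz in its arc-length parametrization; everything else is bookkeeping with the properties of $\mphi$ recorded in Lemma~\ref{lem:modulus}.
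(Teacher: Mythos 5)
Your proof is correct, and its first half runs parallel to the paper's: both telescope the subsequence, show that $h=\sum_k|\nabla g_k|$ and $\tilde h=\sum_k|g_k|$ lie in $L^\phi(\Omega)$ via the Cauchy-plus-pointwise-limit argument and Lemma~\ref{lem:pointwiseCauchy}, and take $E$ to be the divergence set of the scalar series. (Cosmetic differences: the paper rescales $f_k:=2^k(v_{k+1}-v_k)$ with $\|v_{k+1}-v_k\|_{1,\phi}<4^{-k}$ and obtains $|E|=0$ by first extracting an a.e.-convergent subsequence, whereas you read $|E|=0$ directly off the integrability of $\tilde h$ together with $\phi(x,t)\to\infty$; both are fine.) The genuine divergence is in the proof that $\mphi(\Gamma_E)=0$. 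The paper invokes Fuglede's lemma to get an exceptional family outside of which $\int_\gamma h_j\,ds\to\int_\gamma h\,ds$, adds the two families $\{\int_\gamma \tilde h\,ds=\infty\}$ and $\{\int_\gamma h\,ds=\infty\}$, and shows that outside this union the \emph{finiteness} of $\tilde h$ at one point of $\im(\gamma)$ propagates along the curve, so the curve misses $E$ entirely. You run the same gradient estimate in the opposite direction: if $\gamma$ meets $E$ and $\int_\gamma h\,ds<\infty$, the \emph{divergence} of $\sum_k|g_k|$ propagates to every point of $\im(\gamma)$, hence $\im(\gamma)\subset E$, and such curves are disposed of by the function $\infty\,\chi_E$, which is a non-negative Borel function of $L^\phi$-norm zero because $|E|=0$. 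This buys a real simplification: you need neither Fuglede's lemma nor the extra subsequence extraction it entails, and only one auxiliary curve family instead of three. The conventions you lean on --- that curves in $\rect(\Omega)$ are non-constant, so $\ell(\gamma)>0$, and that $\phi(x,\infty)=\infty$ --- are exactly the ones the paper's proof also needs implicitly (a constant curve through a point of $E$ would force $\mphi(\Gamma_E)=\infty$), so they are not gaps specific to your argument.
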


\begin{proof}
By \cite[Lemma~3.3.6]{HarH19b} there exists a subsequence of $(u_i)$ that converges pointwise almost everywhere.
Thus we can choose a subsequence $(v_k) := (u_{i_k})$, such that $(v_k)$ converges pointwise almost everywhere, and
\[
\| v_{k+1} - v_k \|_{1,\phi}
< 4^{-k}
\]
for every $k \in \N$.
For every $k \in \N$, let $f_k:= 2^k(v_{k+1} - v_k)\in C^1(\Omega) \cap W^{1,\phi}(\Omega)$.
For every $j \in \N$, let
\[
g_j
:= \sum_{k=1}^j |f_k|
\quad\text{and}\quad
h_j := \sum_{k=1}^j |\nabla f_k|.
\]
Since the sequences $(g_j(x))$ and $(h_j(x))$ are increasing for every $x \in \Omega$, the limits $g(x) := \lim_{j\to\infty} g_j(x)$ and $h(x) :=\lim_{j\to\infty} h_j(x)$ (possibly $\infty$) exist.
Since the functions $g_j$ are continuous, $g$ is a Borel function.
If $j<m$, then by \cite[Corollary~3.2.5]{HarH19b}
\[
\|g_m -g_j\|_{\phi}
\lesssim \sum_{k=j+1}^m \|f_k\|_{\phi}
\leq \sum_{k=j+1}^\infty \|f_k\|_{1,\phi}
< \sum_{k=j+1}^\infty 2^{-k}
= 2^{-j},
\]
which implies that $(g_j)$ is a Cauchy sequence in $L^{\phi}(\Omega)$.
By Lemma \ref{lem:pointwiseCauchy}, $g$ is the limit of $(g_j)$ in $L^\phi(\Omega)$.
Similarly, since
\[
\|h_m-h_j\|_\phi
\lesssim \sum_{k=j+1}^m \|\nabla f_k\|_{\phi}
\leq \sum_{k=j+1}^\infty \|f_k\|_{1,\phi}
< 2^{-j},
\]
we find that $h$ is the limit of $h_j$ in $L^\phi(\Omega)$.

Since $f_k \in C^1(\Omega)$, for any $k\in\N$ we have
\[
\big{|}|f_k(x)| - |f_k(y)|\big{|}
\leq |f_k(x) - f_k(y)|
\leq \int_\gamma |\nabla f_k| \,ds
\]
for every $x,y\in\Omega$ and any $\gamma \in \rect(\Omega)$ containing $x$ and $y$.
Thus for every $j \in \N$ we have
\begin{equation}\label{equ:upperGrad}
|g_j(x)-g_j(y)|
\leq \sum_{k=1}^j \big{|}|f_k(x)| - |f_k(y)|\big{|}
\leq \sum_{k=1}^j \int_\gamma |\nabla f_k| \,ds
= \int_\gamma h_j \,ds,
\end{equation}
for every $x,y\in\Omega$ and any $\gamma \in \rect(\Omega)$ containing $x$ and $y$.

Denote by $E$ the set of points $x \in \Omega$ such that the sequence $(v_k(x))$ does not converge.
Since $(v_k)$ converges pointwise almost everywhere, we have $|E| = 0$.
It is easy to see that if $x \in E$, then $x \in \{|f_k| > 1\}$ for infinitely many $k \in \N$, and therefore $g(x) = \infty$.
Thus
\[
E
\subset E_\infty
:= \{x \in \Omega : g(x) = \infty\},
\]
and $\Gamma_E \subset \Gamma_{E_\infty}$.
Next we construct a set $\Gamma \subset \rect(\Omega)$ such that
$\Gamma_{E_\infty} \subset \Gamma$ and $\mphi(\Gamma) = 0$.
It then follows by Lemma \ref{lem:modulus} (a) that $\mphi(\Gamma_E) = \mphi(\Gamma_{E_\infty}) = 0$.

By Lemma \ref{lem:Fuglede}, considering a subsequence if necessary, we find an exceptional set $\Gamma_1 \subset \rect(\Omega)$ such that
\[
\lim_{j\to\infty} \int_\gamma h - h_j \,ds
= 0
\]
for every $\gamma \in \rect(\Omega) \setminus \Gamma_1$.
Let
\[
\Gamma_2
:= \Big{\{} \gamma \in \rect(\Omega) : \int_\gamma g \,ds = \infty \Big{\}}
\quad \text{and} \quad
\Gamma_3
:= \Big{\{} \gamma \in \rect(\Omega) : \int_\gamma h \,ds = \infty \Big{\}}.
\]
For every $m \in \N$, the function $g/m$ is $\Gamma_2$ admissible, hence $\mphi(\Gamma_2) \leq \|g \|_\phi / m$.
Thus it follows that $\mphi(\Gamma_2) = 0$.
Similarly, we see that $\mphi(\Gamma_3) = 0$.
Let $\Gamma := \Gamma_1 \cup \Gamma_2 \cup \Gamma_3$.
By Lemma \ref{lem:modulus} (b) $\mphi(\Gamma) = 0$.

It remains to show that $\Gamma_{E_\infty} \subset \Gamma$.
Suppose that $\gamma \in \rect(\Omega)\setminus\Gamma$.
Since $\gamma \notin \Gamma_2$, there must exist some $y \in \im(\gamma)$ with $g(y) < \infty$.
By \eqref{equ:upperGrad}, for any $x \in \im(\gamma)$ and any $j \in \N$ we have
\[
g_j(x)
\leq g_j(y) + |g_j(x) - g_j(y)|
\leq g_j(y) + \int_\gamma h_j \,ds.
\]
Since $\gamma \notin \Gamma_1$, it follows that
\[
\lim_{j\to\infty} \int_\gamma h_j \,ds
= \int_\gamma h \,ds,
\]
where the right-hand side is finite because $\gamma \notin \Gamma_3$. Thus we have
\[
g(x)
= \lim_{j\to\infty} g_j(x)
\leq \lim_{j\to\infty} \Big{(}g_j(y) + \int_\gamma h_j \,ds \Big{)}
= g(y) + \int_\gamma h \,ds
< \infty.
\]
Since $x \in \im(\gamma)$ was arbitrary, it follows that $\gamma \notin \Gamma_{E_\infty}$.
And since $\gamma\notin\Gamma$ was arbitrary, it follows that $\Gamma_{E_\infty} \subset \Gamma$.
\end{proof}


\section{Fuglede's Theorem}\label{sec:fugThm}

We begin this section by defining some notations.
Let $k \in \{1,2,\dots,n\}$.
If $z \in \R$ and $y = (y_1,y_2,\dots,y_{n-1}) \in \R^{n-1}$ we define
\[
(y,z)_k
:= (y_1,\dots,y_{k-1},z,y_{k},\dots,y_{n-1}) \in \Rn.
\]
For every $x = (x_1,x_2,\dots,x_n) \in \Rn$, we write $\tilde{x}_k := (x_1,\dots,x_{k-1},x_{k+1},\dots,x_n) \in \R^{n-1}$.
With these notations, we have $x = (\tilde{x}_k,x_k)_k$.
We define $\ok \subset \R^{n-1}$ by
\[
\ok
:= \{ \tilde{x}_k : x \in \Omega \}
= \{ y \in \R^{n-1} : (y,z)_k \in \Omega \text{ for some } z \in \R \}.
\]
The set $\ok$ is, in a sense, the orthogonal projection of $\Omega$ into the space $\{x \in \Rn : x_k = 0\}$, but strictly speaking this is not true, since a projection is a function $P: \Rn \to \Rn$, but $\ok \subset \R^{n-1}$.
For every $y \in \ok$, we let $\zk(y) \subset \R$ be the set of points $z$, such that $(y,z)_k \in \Omega$.
Note that $\Omega = \{ (y,z)_k : y \in \ok\text{ and } z\in \zk(y) \}$. 

Since we will be using Lebesgue measures with different dimensions simultaneously, we will use subscripts to differentiate them, i.e. $m$-dimensional measure will be denoted by $|\cdot|_m$.

\begin{defn}
We say that $u : \Omega \to \R$ is \emph{absolutely continuous on lines}, $u \in ACL(\Omega)$, if it is absolutely continuous on almost every line segment in $\Omega$ parallel to the coordinate axes.
More formally, let $k \in \{1,2,\dots,n\}$ and let $E_k \subset \ok$ be the set of points $y$ such that the function
\[
f_y: \zk(y) \to [-\infty,\infty],\, f_y(z) = u((y,z)_k)
\]
is absolutely continuous on every compact interval $[a,b] \subset \zk(y)$.
Then $u \in ACL(\Omega)$ if and only if $|\ok \setminus E_k|_{n-1} = 0$ for every $k$.
\end{defn}

Let $u \in ACL(\Omega)$.
Absolute continuity implies that the classical partial derivative $\partial_k u$ of $u \in ACL(\Omega)$ exist for every $x \in \Omega$ such that $\tilde{x}_k \in E_k$.
Since $|\ok \setminus E_k|_{n-1} = 0$, it follows by Fubini's theorem that $\partial_k u$ exists for almost every $x \in \Omega$.
Another application of Fubini's theorem shows that the classical partial derivative is equal to the weak partial derivative, see \cite[Theorem~2.1.4]{Zie89}.
Since the partial derivatives exist almost everywhere, it follows that the gradient $\nabla u$ exists almost everywhere.
A function $u \in ACL(\Omega)$ is said to belong to $ACL^\phi(\Omega)$, if $|\nabla u| \in L^\phi(\Omega)$.

The following lemma follows immediately from the definitions of $L^\phi(\Omega)$, $ACL^\phi(\Omega)$ and $ W^{1,\phi}(\Omega)$.

\begin{lem}\label{lem:ACL}
If $\phi \in \Phi_w(\Omega)$, then $ACL^\phi(\Omega) \cap L^\phi(\Omega) \subset W^{1,\phi}(\Omega)$.
\end{lem}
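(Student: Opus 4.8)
The plan is to unwind the three definitions involved and chain the inclusions, since there is essentially no analytic content beyond what has already been recorded in the text preceding the statement. Fix $u \in ACL^\phi(\Omega) \cap L^\phi(\Omega)$. The first step is to recall the observation made just before the lemma: because $u \in ACL(\Omega)$, the classical partial derivatives $\partial_1 u,\dots,\partial_n u$ exist for almost every $x \in \Omega$ (by Fubini's theorem, using that $|\ok\setminus E_k|_{n-1}=0$ for each $k$), and by another application of Fubini (cf.\ \cite[Theorem~2.1.4]{Zie89}) these classical partial derivatives coincide with the weak partial derivatives of $u$. Hence $u$ does possess weak partial derivatives, and it remains only to check that each of them lies in $L^\phi(\Omega)$.

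The second step is to pass from $|\nabla u| \in L^\phi(\Omega)$, which is precisely the defining condition of $ACL^\phi(\Omega)$, to $\partial_k u \in L^\phi(\Omega)$ for every $k$. Pointwise we have $|\partial_k u(x)| \le |\nabla u(x)|$ for a.e.\ $x$, so since $t \mapsto \phi(x,t)$ is non-decreasing for every $x$, the modular satisfies $\varrho_\phi(\lambda\,\partial_k u) \le \varrho_\phi(\lambda\,|\nabla u|)$ for all $\lambda>0$; by the definition \eqref{def:norm} of the Luxemburg norm this gives $\|\partial_k u\|_\phi \le \big\||\nabla u|\big\|_\phi < \infty$, and also $\lim_{\lambda\to 0^+}\varrho_\phi(\lambda\,\partial_k u)=0$, so indeed $\partial_k u \in L^\phi(\Omega)$.

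Finally, combining $u \in L^\phi(\Omega)$ (an explicit hypothesis) with the fact that all weak partial derivatives of $u$ exist and belong to $L^\phi(\Omega)$, the definition of $W^{1,\phi}(\Omega)$ yields $u \in W^{1,\phi}(\Omega)$, which is the claim. I do not expect any genuine obstacle here; the only point requiring a moment's care is the identification of classical with weak derivatives, and that has already been dispatched in the discussion surrounding the definition of $ACL(\Omega)$, so the proof is a short citation-and-definition argument.
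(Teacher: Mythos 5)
Your proof is correct and matches the paper's approach: the paper simply remarks that the lemma ``follows immediately from the definitions,'' relying on the preceding discussion that identifies the classical partial derivatives of an $ACL(\Omega)$ function with its weak partial derivatives via Fubini's theorem and \cite[Theorem~2.1.4]{Zie89}. Your additional verification that $\|\partial_k u\|_\phi \le \big\||\nabla u|\big\|_\phi$ just makes explicit a step the paper leaves implicit.
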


\begin{defn}\label{def:ACC}
For any $u : \Omega \to R$, let $\acc(u) \subset \rect(\Omega)$ be the family of curves $\gamma : [0,\ell(\gamma)] \to \Omega$ such that $u \circ \gamma$ is not absolutely continuous on $[0,\ell(\gamma)]$.
If $\mphi(\acc(u)) = 0$, then we say that $u$ is \emph{absolutely continuous on curves}, $u \in ACC(\Omega)$.
\end{defn}

In the next lemma, we show that $ACC(\Omega)$ is a subset of $ACL(\Omega)$, if $\phi$ satisfies a suitable condition.

\begin{lem}\label{lem:ACCACL}
Let $\phi \in \Phi_w(\Omega)$ and assume that $\phi$ satisfies the following condition:
\begin{equation}\label{def:weakA0}
\text{there exist }\beta > 0 \text{ such that }\phi(x,\beta)\geq 1\text{ for almost every }x \in \Omega.
\end{equation}
Then
\[
ACC(\Omega) \subset ACL(\Omega).
\]
\end{lem}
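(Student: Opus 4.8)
The plan is to take $u \in ACC(\Omega)$ and verify, for each coordinate direction $k \in \{1,\dots,n\}$, the defining condition of $ACL(\Omega)$ in that direction. First I would use the hypothesis $\mphi(\acc(u)) = 0$ together with Lemma~\ref{lem:modulus}(c) to produce a single non-negative Borel function $w \in L^\phi(\Omega)$ with $\int_\gamma w\,ds = \infty$ for every $\gamma \in \acc(u)$. The underlying idea is this: if, for some $y \in \ok$, the slice $z \mapsto u((y,z)_k)$ fails to be absolutely continuous on a compact interval $[a,b] \subset \zk(y)$, then the arclength-parametrized segment $\gamma(t) = (y,a+t)_k$, $t \in [0,b-a]$, is a rectifiable curve lying in $\acc(u)$, so that $\int_a^b w((y,z)_k)\,dz = \int_\gamma w\,ds = \infty$. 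Hence such exceptional $y$ can only belong to the set of $y$'s along which $z \mapsto w((y,z)_k)$ fails to be locally integrable, and it remains to show this last set is $|\cdot|_{n-1}$-null.

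Making this precise requires knowing that $w$ is integrable over every subset of $\Omega$ of finite Lebesgue measure, and this is the step where the hypothesis \eqref{def:weakA0} enters; I expect it to be the main obstacle. The argument I have in mind: choose $\lambda > 0$ with $\varrho_\phi(\lambda w) < \infty$ (possible since $w \in L^\phi(\Omega)$), let $\beta>0$ be as in \eqref{def:weakA0} and $L\ge1$ the constant from the $L$-almost increasing property of $t \mapsto \phi(x,t)/t$. On $\{x : \lambda w(x) \geq \beta\}$, applying the almost monotonicity with $\beta \leq \lambda w(x)$ and then $\phi(x,\beta) \geq 1$ gives
\[
\phi(x, \lambda w(x)) \;\geq\; \frac{\lambda w(x)}{L\beta}\,\phi(x,\beta) \;\geq\; \frac{\lambda w(x)}{L\beta},
\]
so $\int_{\{\lambda w \geq \beta\}} \lambda w\,dx \leq L\beta\,\varrho_\phi(\lambda w) < \infty$, i.e.\ $w\,\mathbf 1_{\{\lambda w \geq \beta\}} \in L^1(\Omega)$; since $w \leq \beta/\lambda$ on the complementary set, $w$ is integrable over any set of finite measure.

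With that in hand I would fix $k$ and apply Fubini's theorem to $w\,\mathbf 1_{\{\lambda w \geq \beta\}} \in L^1(\Omega)$ along the decomposition $\Omega = \{(y,z)_k : y \in \ok,\ z \in \zk(y)\}$, obtaining a set $N_k \subset \ok$ with $|N_k|_{n-1} = 0$ such that for every $y \in \ok \setminus N_k$ the slice $z \mapsto w((y,z)_k)$ is integrable on every compact interval contained in $\zk(y)$ (the remaining part $w\,\mathbf 1_{\{\lambda w < \beta\}}\leq \beta/\lambda$ contributes nothing to local non-integrability). Finally, letting $E_k \subset \ok$ be the set from the definition of $ACL(\Omega)$, the observation in the first paragraph shows $\ok \setminus E_k \subset N_k$, hence $|\ok \setminus E_k|_{n-1} = 0$; as $k$ was arbitrary, $u \in ACL(\Omega)$. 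The only points still needing a word of justification are routine: that a coordinate segment with image in $\Omega$ is a rectifiable curve whose line integral of $w$ coincides with $\int_a^b w((y,z)_k)\,dz$, and that $\mphi(\acc(u)) = 0$ makes $\adm(\acc(u))$ nonempty so that Lemma~\ref{lem:modulus}(c) indeed applies.
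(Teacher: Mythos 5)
Your proposal is correct and follows essentially the same route as the paper's proof: extract $w\in L^\phi(\Omega)$ from Lemma~\ref{lem:modulus}(c), note that a non-AC slice yields a coordinate segment in $\acc(u)$ and hence an infinite line integral of $w$, and use \eqref{def:weakA0} together with \ainc{1} to turn membership in $L^\phi$ into local $L^1$-integrability so that Fubini forces the exceptional set of $y$'s to be $|\cdot|_{n-1}$-null. The paper packages the last step slightly differently (it bounds $\phi(x,t)\geq t/(a\beta)-1/a$ for all $t\geq 0$ and derives the contradiction $1\geq\int_{\ok\setminus E_k}\infty\,dy$ directly from $\varrho_\phi(v/\lambda)\leq 1$), but this is the same estimate as your splitting into $\{\lambda w\geq\beta\}$ and its complement.
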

 
\begin{rem}\label{rem:weakA0}
Note that (A0) implies \eqref{def:weakA0}, but not the other way around, since we do not assume that $\phi(x,1/\beta) \leq 1$.
We also note \eqref{def:weakA0} is equivalent to
\begin{equation}\label{def:weakA0Delta}
\text{there exist }\beta > 0 \text{ and }\delta > 0\text{ such that }\phi(x,\beta)\geq \delta\text{ for almost every }x \in \Omega.
\end{equation}
It is clear that \eqref{def:weakA0} is just a special case of \eqref{def:weakA0Delta} with $\delta = 1$.
It is also clear that \eqref{def:weakA0Delta} implies \eqref{def:weakA0}, if $\delta > 1$.
Suppose then, that $\phi$ satisfies \eqref{def:weakA0Delta} with $0 < \delta < 1$.
Then
\begin{equation}\label{equ:weakA0Frac}
\frac{\delta}{\beta}
\leq \frac{\phi(x,\beta)}{\beta}
\end{equation}
for almost every $x \in \Omega$.
By (aInc)$_1$ (which $\phi$ satisfies by definition of $\Phi_w$), there exist a constant $a \geq 1$ such that
\begin{equation}\label{equ:aIncBeta}
\frac{\phi(x,\beta)}{\beta}
\leq a\frac{\phi(x,t)}{t}
\end{equation}
for almost every $x \in \Omega$ and every $t \geq \beta$.
Choosing $t := a\beta / \delta > \beta$, it follows from \eqref{equ:weakA0Frac} and \eqref{equ:aIncBeta} that $\phi(x,a\beta/\delta) \geq 1$ for almost every $x \in \Omega$, and therefore $\phi$ satisfies \eqref{def:weakA0}.
Thus the choice $\delta = 1$ in \eqref{def:weakA0} has no special meaning, except for making notations simpler by getting rid of $\delta$.
\end{rem}

\begin{proof}[Proof of Lemma \ref{lem:ACCACL}]
Let $u \in ACC(\Omega)$, and let $k \in \{1,\dots,n\}$ and let $E_k \subset \R^{n-1}$ be as in Definition \ref{def:ACC}.
By Lemma \ref{lem:modulus}, there exists a non-negative Borel function $v \in L^\phi(\Omega)$ such that $\int_\gamma v \,ds = \infty$ for every $\gamma \in \acc(u)$.
For every $y \in \ok\setminus E_k$, let $I(y) \subset \zk(y)$ be some compact interval such that $v$ is not absolutely continuous on $I(y)$, and let $\gamma_y : [0,|I(y)|_1] \to \Omega$ be a parametrization of $I(y)$.
Since $\gamma_y \in \acc(u)$, it follows that $\int_{I(y)} v((y,z)_k) \, dz = \int_{\gamma_y} v(s) \,ds = \infty$.

From \eqref{equ:weakA0Frac} (with $\delta = 1$) and \eqref{equ:aIncBeta} we get
\[
\phi(x,t)
\geq \frac{t}{a\beta}
\]
for almost every $x \in \Omega$ and every $t \geq \beta$.
Since $\phi(x,t) \geq 0$, it follows that
\begin{equation}\label{equ:phiGegT}
\phi(x,t)
\geq \frac{t}{a\beta} - \frac{1}{a}
\end{equation}
for almost every $x \in \Omega$ and every $t \geq 0$.
Let $\lambda > \| v \|_\phi$.
By \eqref{def:norm} and Fubini's theorem we have
\begin{equation}\label{equ:1GeqInt}
\begin{aligned}
& 1
\geq \int_\Omega \phi\bigg{(}x, \frac{v(x)}{\lambda}\bigg{)} \,dx
= \int_{\ok}\int_{\zk(y)} \phi\bigg{(}(y,z)_k,\frac{v((y,z)_k)}{\lambda}\bigg{)} \,dz\,dy \\
& \geq \int_{\ok \setminus E_k} \int_{I(y)} \phi\bigg{(}(y,z)_k, \frac{v((y,z)_k)}{\lambda}\bigg{)} \,dz\,dy.
\end{aligned}
\end{equation}
By \eqref{equ:phiGegT} we have
\[
\int_{I(y)} \phi\bigg{(}(y,z)_k, \frac{v((y,z)_k)}{\lambda}\bigg{)} \,dz
\geq \int_{I(y)} \frac{v((y,z)_k)}{a\beta\lambda} \,dz - \int_{I(y)} \frac{1}{a} \, dz.
\]
Since $\int_{I(y)} v((y,z)_k) \,dz = \infty$, the first integral on the right-hand side is infinite, and since $I(y)$ is compact, the second integral is finite.
Thus
\[
\int_{I(y)} \phi\bigg{(}(y,z)_k, \frac{v((y,z)_k)}{\lambda}\bigg{)} \,dz
= \infty.
\]
Inserting this into \eqref{equ:1GeqInt}, we get 
\[
1
\geq \int_{\ok \setminus E_k} \int_{I(y)} \phi\bigg{(}(y,z)_k, \frac{v((y,z)_k)}{\lambda}\bigg{)} \,dz\,dy \\
= \int_{\ok \setminus E_k} \infty \,dy.
\]
This is possible only if $|\ok \setminus E_k|_{n-1} = 0$.
Thus $u \in ACL(\Omega)$.
\end{proof}

The next example shows that the assumption \eqref{def:weakA0} in the preceding lemma is not redundant.

\begin{eg}
Let $\Omega = \R^2$.
For $x = (y,z) \in \R^2$, let
\[
\phi(x,t) := \left\{\begin{array}{ll}
t & \text{if }y = 0, \\
0 & \text{if }y \neq 0\text{ and }t \leq |y|^{-1}, \\
t & \text{if }y \neq 0\text{ and }t > |y|^{-1}.
\end{array}\right.
\]
It easily follows from \cite[Theorem~2.5.4]{HarH19b} that $\phi \in \Phi_w(\R^2)$.
Define $u : \R^2 \to \R$ by
\[
u(y,z) := \left\{\begin{array}{ll}
0 & \text{if }y < 0, \\
1 & \text{if }y = 0, \\
2 & \text{if }y > 0.
\end{array}\right.
\]
It is trivial that $u \notin ACL(\R^2)$.
It is however the case that $u \in ACC(\R^2)$.

It is easy to see, that $\acc(u) = \Gamma_E$, where $E := \{ (y,z) \in \R^2 : y = 0\}$.
Define $v : \R^2 \to [0,\infty]$ by
\[
v(y,z) := \left\{\begin{array}{ll}
\infty & \text{if }y = 0, \\
|y|^{-1} & \text{if }y \neq 0.
\end{array}\right.
\]
Since the set
\[
\{ (y,z) \in \R^2 : v(y,z) > r \}
= \{(y,z) \in \R^2 : |y| < r^{-1} \}
\]
is open for every $r \in \R$, it follows that $v$ is a Borel function.
Fix $\gamma \in \Gamma_E$.
For every $a \in [0,\ell(\gamma)]$, we write $(y_a,z_a) := \gamma(a)$.
Now, there exists some $b \in [0,\ell(\gamma)]$ with $y_b = 0$.
Since $\gamma$ is parametrized by arc-length, we have
\[
|y_a|
= |y_a - y_b|
\leq |\gamma(a) - \gamma(b)|
\leq |a - b| 
\]
for every $a \in [0,\ell(\gamma)]$.
If $a \neq b$, then $v(\gamma(a))\geq |a-b|^{-1}$, since if $y_a = 0$, then $v(\gamma(a)) = \infty$, and if $y_a \neq 0$, then $v(\gamma(a)) = |y_a|^{-1} \geq |a-b|^{-1}$.
Thus
\[
\int_\gamma v \,ds
= \int_0^b v(\gamma(a)) \,da + \int_b^{\ell(\gamma)} v(\gamma(a)) \,da
\geq \int_0^b \frac{1}{|a-b|} \,da + \int_b^{\ell(\gamma)} \frac{1}{|a-b|} \,da
= \infty.
\]
Since this holds for all $\gamma \in \Gamma_E$, by Lemma \ref{lem:modulus} (c), to show that $\mphi(\Gamma_E) = 0$, it suffices to show that $v \in L^\phi(\R^2)$.
If $x = (y,z)$ and $y \neq 0$, then $\phi(x,v(x)) = \phi(x, |y|^{-1}) = 0$.
Thus $\phi(x,v(x)) = 0$ almost everywhere, and $\rho_\phi(v) = 0$.
By \eqref{def:norm}, it follows that $\|v\|_\phi \leq 1$, and therefore $v \in L^\phi(\R^2)$.
\end{eg}

We know that $\nabla u$ exists for every $u \in ACL(\Omega)$.
Thus, if $\phi$ satisfies \eqref{def:weakA0}, then Lemma \ref{lem:ACCACL} implies that $\nabla u$ exists for every $u \in ACC(\Omega)$.
We say that $u \in ACC^\phi(\Omega)$, if $u \in ACC(\Omega)$ and $\nabla u \in L^\phi(\Omega)$.
 
\begin{thm}[Fuglede's theorem]\label{thm:Fuglede}
Let $\phi \in \Phi_w(\Omega)$ satisfy \eqref{def:weakA0}.
If $C^1(\Omega)$-functions are dense in $W^{1,\phi}(\Omega)$, then $u \in W^{1,\phi}(\Omega)$ if and only if $u \in L^\phi(\Omega)$ and it has a representative that belongs to $ACC^\phi(\Omega)$.
In short
\[
ACC^\phi(\Omega) \cap L^\phi(\Omega)
= W^{1,\phi}(\Omega).
\]
\end{thm}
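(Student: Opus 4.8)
The plan is to prove the two inclusions separately; only one of them uses the density hypothesis. The inclusion $ACC^\phi(\Omega)\cap L^\phi(\Omega)\subset W^{1,\phi}(\Omega)$ is immediate from the results already in place: if $u\in ACC^\phi(\Omega)\cap L^\phi(\Omega)$ then in particular $u\in ACC(\Omega)$, so Lemma \ref{lem:ACCACL} (which is exactly where \eqref{def:weakA0} enters) gives $u\in ACL(\Omega)$; since also $\nabla u\in L^\phi(\Omega)$ we get $u\in ACL^\phi(\Omega)\cap L^\phi(\Omega)$, and Lemma \ref{lem:ACL} then yields $u\in W^{1,\phi}(\Omega)$.

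For the reverse inclusion, let $u\in W^{1,\phi}(\Omega)$, so $u\in L^\phi(\Omega)$ automatically and it remains to produce a representative $\tilde u\in ACC^\phi(\Omega)$. Using the density of $C^1(\Omega)$, I would choose $(u_i)\subset C^1(\Omega)\cap W^{1,\phi}(\Omega)$ with $u_i\to u$ in $W^{1,\phi}(\Omega)$; this is a Cauchy sequence, so Lemma \ref{lem:zeroModMeas} provides a set $E$ with $\mphi(\Gamma_E)=|E|=0$ and a subsequence converging pointwise everywhere outside $E$. Passing to a further subsequence (which alters neither $E$ nor the pointwise limit off $E$) we may in addition assume $\|u_{i_{k+1}}-u_{i_k}\|_{1,\phi}<2^{-k}$; relabel this subsequence $(v_k)$ and set $\tilde u(x):=\lim_k v_k(x)$ for $x\notin E$ and $\tilde u(x):=0$ for $x\in E$. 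Since $(v_k)$ still converges to $u$ in $L^\phi(\Omega)$, a standard subsequence argument shows $\tilde u=u$ almost everywhere, so $\tilde u$ is a representative of $u$ and in particular $\tilde u\in L^\phi(\Omega)$.

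Next I would introduce, as in the proof of Lemma \ref{lem:zeroModMeas}, the Borel function $H:=\sum_{k=1}^\infty|\nabla(v_{k+1}-v_k)|$ (an increasing pointwise limit of continuous functions). Its partial sums form a Cauchy sequence in $L^\phi(\Omega)$ by \cite[Corollary~3.2.5]{HarH19b} together with $\sum_k\|v_{k+1}-v_k\|_{1,\phi}<\infty$, so Lemma \ref{lem:pointwiseCauchy} gives $H\in L^\phi(\Omega)$. Put $\Gamma_H:=\{\gamma\in\rect(\Omega):\int_\gamma H\,ds=\infty\}$; as $H/m$ is $\Gamma_H$-admissible for every $m\in\N$ we get $\mphi(\Gamma_H)\le\|H\|_\phi/m\to 0$, hence $\mphi(\Gamma_H)=0$, and so $\Gamma^*:=\Gamma_E\cup\Gamma_H$ satisfies $\mphi(\Gamma^*)=0$ by Lemma \ref{lem:modulus}(b). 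The key claim is that $\acc(\tilde u)\subset\Gamma^*$, which by Lemma \ref{lem:modulus}(a) then gives $\mphi(\acc(\tilde u))=0$, i.e.\ $\tilde u\in ACC(\Omega)$. To prove the claim, fix $\gamma\in\rect(\Omega)\setminus\Gamma^*$; then $\im(\gamma)\cap E=\emptyset$, so $\tilde u\circ\gamma=\lim_k v_k\circ\gamma$ pointwise on $[0,\ell(\gamma)]$, and $\int_\gamma H\,ds<\infty$. Each $v_k\circ\gamma$ is absolutely continuous (since $v_k\in C^1(\Omega)$ and $\gamma$ is rectifiable), with $|((v_{k+1}-v_k)\circ\gamma)'(t)|\le|\nabla(v_{k+1}-v_k)(\gamma(t))|$ for a.e.\ $t$; telescoping $v_k=v_l+\sum_{j=l}^{k-1}(v_{j+1}-v_j)$ together with $\int_0^{\ell(\gamma)}|\nabla(v_{j+1}-v_j)(\gamma(t))|\,dt=\int_\gamma|\nabla(v_{j+1}-v_j)|\,ds$ shows that $((v_k\circ\gamma)')_k$ is Cauchy in $L^1([0,\ell(\gamma)])$ with some limit $G$ (here $\int_\gamma H\,ds<\infty$ is exactly what controls the tails). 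Then $v_k(\gamma(b))-v_k(\gamma(a))=\int_a^b(v_k\circ\gamma)'\,dt\to\int_a^b G\,dt$ for every $a<b$, while the left-hand side tends to $\tilde u(\gamma(b))-\tilde u(\gamma(a))$; hence $\tilde u\circ\gamma(t)=\tilde u(\gamma(0))+\int_0^t G\,ds$ is absolutely continuous, i.e.\ $\gamma\notin\acc(\tilde u)$.

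It then remains only to identify the gradient: since $\tilde u\in ACC(\Omega)$ and $\phi$ satisfies \eqref{def:weakA0}, Lemma \ref{lem:ACCACL} gives $\tilde u\in ACL(\Omega)$, so the classical gradient of $\tilde u$ exists a.e.\ and coincides a.e.\ with the weak gradient of $\tilde u$, which equals $\nabla u\in L^\phi(\Omega)$ because $\tilde u=u$ a.e.; thus $\tilde u\in ACC^\phi(\Omega)\cap L^\phi(\Omega)$, completing the argument. I expect the only genuine obstacle to be the claim $\acc(\tilde u)\subset\Gamma^*$: one must upgrade ``pointwise convergence of $v_k\circ\gamma$ together with $\int_\gamma H\,ds<\infty$'' into honest absolute continuity of the limit along $\gamma$, and the device above --- $L^1([0,\ell(\gamma)])$-convergence of the one-dimensional derivatives $(v_k\circ\gamma)'$ --- is the natural way to do it; the accompanying bookkeeping (checking that it is exactly $\Gamma_E$, rather than a capacity-zero set as in \cite{HarHM04}, that must be discarded) is largely absorbed into Lemma \ref{lem:zeroModMeas}.
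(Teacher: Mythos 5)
Your proposal is correct, and both inclusions are organized exactly as in the paper: the easy direction via Lemmas \ref{lem:ACL} and \ref{lem:ACCACL}, and the hard direction via density, Lemma \ref{lem:zeroModMeas}, a rapidly convergent subsequence, and a dominating gradient function in $L^\phi(\Omega)$. The difference lies in the final absolute-continuity step. The paper keeps the partial sums $g_i = |\nabla u_1| + \sum_{j<i}|\nabla u_{j+1}-\nabla u_j|$ alive, invokes Fuglede's lemma (Lemma \ref{lem:Fuglede}) once more to obtain an exceptional family $\Gamma_2$ off which $\int_\gamma (g-g_i)\,ds\to 0$, observes that $\Gamma_2$ is stable under restriction to subintervals, and then verifies the $\varepsilon$--$\delta$ definition of absolute continuity of $\tilde u\circ\gamma$ directly from the bound $\sum_j|\tilde u(\gamma(b_j))-\tilde u(\gamma(a_j))|\le\sum_j\int_{\gamma|_{[a_j,b_j]}}g\,ds$ with $g\circ\gamma\in L^1$. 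You instead work one-dimensionally along each good curve: from $\int_\gamma H\,ds<\infty$ you get that the derivatives $(v_k\circ\gamma)'$ form a Cauchy sequence in $L^1([0,\ell(\gamma)])$, and you exhibit $\tilde u\circ\gamma$ as an indefinite integral of the $L^1$-limit, which is absolute continuity by definition. This buys you a slightly leaner exceptional family ($\Gamma_E\cup\Gamma_H$ rather than three pieces), no second appeal to Fuglede's lemma in the theorem itself, and no need for the subcurve-stability remark; the price is that you must justify the chain rule $|((v_{k+1}-v_k)\circ\gamma)'|\le|\nabla(v_{k+1}-v_k)|\circ\gamma$ a.e.\ for an arc-length parametrized curve, which is standard (Lipschitz $\gamma$, $C^1$ integrand) but should be stated. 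Both arguments are sound; yours is arguably the more self-contained at this stage, the paper's the more uniform with its earlier machinery.
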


\begin{proof}
By Lemmas \ref{lem:ACL} and \ref{lem:ACCACL}, we have
\[
ACC^\phi(\Omega) \cap L^\phi(\Omega)
\subset ACL^\phi(\Omega) \cap L^\phi(\Omega)
\subset W^{1,\phi}(\Omega).
\]
Thus it suffices to show that $W^{1,\phi}(\Omega) \subset ACC^\phi(\Omega)$.
Since $|\nabla u| \in L^\phi(\Omega)$ whenever $u \in W^{1,\phi}(\Omega)$, we only have to show that $W^{1,\phi}(\Omega) \subset ACC(\Omega)$.

Suppose that $u \in W^{1,\phi}(\Omega)$.
Let $(u_i)$ be a sequence of functions in $C^1(\Omega) \cap W^{1,\phi}(\Omega)$ converging to $u$ in $W^{1,\phi}(\Omega)$.
By Lemma \ref{lem:zeroModMeas}, passing to a subsequence if necessary, we may assume that $(u_i)$ converges pointwise everywhere, except in a set $E$ with $\mphi(\Gamma_E) =|E|_n = 0$.
Let $\tilde{u}(x) := \liminf_{i\to\infty} u_i(x)$ for every $x \in \Omega$.
Since the functions $u_i$ are continuous, it follows that $\tilde{u}$ is a Borel function.
Since $u_i(x)$ converges for every $x\in\Omega\setminus E$, it follows that $\tilde{u}(x) = \lim_{i\to\infty}u_i(x)$ for $x\in\Omega\setminus E$.
By Lemma \ref{lem:pointwiseCauchy}, $u_i \to \tilde{u}$ in $L^\phi(\Omega)$, and it follows that $\tilde{u} = u$ almost everywhere.

Since $u_i \to u$ in $W^{1,\phi}(\Omega)$ we may assume, considering a subsequence if necessary, that
\[
\|\nabla u_{i+1} - \nabla u_i||_\phi < 2^{-i}
\]
for every $i \in \N$.
Since
\[
u_i = u_1 + \sum_{j=1}^{i-1} (u_{j+1} - u_j),
\]
we have $|\nabla u_i| \leq g_i$ for every $i \in \N$, where
\[
g_i = |\nabla u_1| + \sum_{j=i}^{i-1}|\nabla u_{j+1} - \nabla u_j|.
\]
Since the sequence $(g_i(x))$ is increasing for every $x\in\Omega$, the limit $g(x) := \lim_{i\to\infty} g_i(x)$ (possibly $\infty$) exists.
Since the functions $g_i$ are continuous, $g$ is a Borel function.
For every $m>n$ we have
\[
\|g_m - g_n\|_\phi
= \Big{\|}\sum_{j=n}^{m-1} |\nabla u_{j+1} - \nabla u_j|\Big{\|}_\phi 
\leq \sum_{j=n}^\infty \|\nabla u_{j+1} - \nabla u_j\|_\phi
< \sum_{j=n}^\infty 2^{-i}
< 2^{-n+1},
\]
i.e. $(g_i)$ is a Cauchy sequence in $L^\phi(\Omega)$.
Lemma \ref{lem:pointwiseCauchy} implies that $g_i \to g$ in $L^\phi(\Omega)$.

Let
\[
\Gamma_1
:= \Big{\{} \gamma \in \rect(\Omega) : \int_\gamma g \,ds = \infty \Big{\}}.
\]
Since $g/j$ is $\Gamma_1$-admissible for every $j\in\N$, we find that $\mphi(\Gamma_1) = 0$.
By Lemma \ref{lem:Fuglede}, passing to a subsequence if necessary, we find an exceptional set $\Gamma_2 \subset \rect(\Omega)$, such that
\[
\lim_{i\to\infty} \int_\gamma g - g_i \,ds
= 0
\]
for every $\gamma \in \rect(\Omega) \setminus \Gamma_2$.
The set $\Gamma_2$ has the following property: if $\gamma \in \rect(\Omega) \setminus \Gamma_2$ and $0\leq a\leq b\leq\ell(\gamma)$, then $\gamma|_{[a,b]} \in \rect(\Omega) \setminus \Gamma_2$.
The reason is that, since $g-g_i\geq 0$, we have
\[
\int_\gamma g - g_i \,ds
\geq \int_{\gamma|_{[a,b]}} g - g_i \,ds
\geq 0,
\]
and since the first term tends to zero, the middle term must also tend to zero.
Let $\Gamma := \Gamma_1 \cup \Gamma_2 \cup \Gamma_E$.
By Lemma \ref{lem:modulus} (b) $\mphi(\Gamma) = 0$.

We complete the proof by showing that $\tilde{u} \circ \gamma$ is absolutely continuous for every $\gamma \in \rect(\Omega) \setminus \Gamma$.
Let $k\in\N$ and for $j \in \{1,2,\dots,k\}$, let $(a_j,b_j) \subset [0, \ell(\gamma)]$ be disjoint intervals.
Since $\im(\gamma)$ does not intersect $E$, and $u_i \in C^1(\Omega)$ for every $i$, we have
\[
\sum_{j=1}^k |\tilde{u}(\gamma(b_j)) - \tilde{u}(\gamma(a_j))|
= \lim_{i\to\infty} \sum_{j=1}^k |u_i(\gamma(b_j)) - u_i(\gamma(a_j))|
\leq \limsup_{i\to\infty} \sum_{j=1}^k \int_{\gamma|_{[a_j,b_j]}} |\nabla u_i| \,ds.
\]
Using first the fact that $|\nabla u_i| \leq g_i$, and then the fact that $\gamma|_{[a_j,b_j]} \notin \Gamma_2$, we get
\[
\limsup_{i\to\infty} \sum_{j=1}^k \int_{\gamma|_{[a_j,b_j]}} |\nabla u_i| \,ds
\leq \limsup_{i\to\infty} \sum_{j=1}^k \int_{\gamma|_{[a_j,b_j]}} g_i \,ds
= \sum_{j=1}^k\int_{\gamma|_{[a_j,b_j]}} g \,ds.
\]
Thus
\[
\sum_{j=1}^k |\tilde{u}(\gamma(b_j)) - \tilde{u}(\gamma(a_j))|
\leq \sum_{j=1}^k\int_{\gamma|_{[a_j,b_j]}} g \,ds
\]
Since $\gamma \notin \Gamma_1$, we have $g\circ \gamma \in L^1[0,\ell(\gamma)]$, which together with the inequality above implies that $\tilde{u} \circ \gamma$ is absolutely continuous on $[0,\ell(\gamma)]$.
\end{proof}

We can combine Theorem \ref{thm:Fuglede} with Lemmas \ref{lem:ACL} and \ref{lem:ACCACL} to get the following corollary:

\begin{cor}\label{cor:Fuglede}
Let $\phi \in \Phi_w(\Omega)$ satisfy \eqref{def:weakA0}.
If $C^1(\Omega)$-functions are dense in $W^{1,\phi}(\Omega)$, then 
\[
ACC^\phi(\Omega) \cap L^\phi(\Omega)
= ACL^\phi(\Omega) \cap L^\phi(\Omega)
= W^{1,\phi}(\Omega).
\]
\end{cor}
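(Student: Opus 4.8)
The plan is to chain together the two inclusions that already appear at the outset of the proof of Theorem \ref{thm:Fuglede} with the equality that theorem establishes. First I would recall that, by Lemmas \ref{lem:ACL} and \ref{lem:ACCACL} (the latter using the hypothesis \eqref{def:weakA0}), one has
\[
ACC^\phi(\Omega) \cap L^\phi(\Omega) \subset ACL^\phi(\Omega) \cap L^\phi(\Omega) \subset W^{1,\phi}(\Omega),
\]
exactly as in the first display of that proof: membership in $ACC(\Omega)$ forces membership in $ACL(\Omega)$ by Lemma \ref{lem:ACCACL}, the gradient is unchanged so $\nabla u \in L^\phi(\Omega)$ is preserved, and Lemma \ref{lem:ACL} then gives the second inclusion.

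Next I would invoke Theorem \ref{thm:Fuglede} itself, which under the standing density hypothesis on $C^1(\Omega)$ gives $W^{1,\phi}(\Omega) = ACC^\phi(\Omega) \cap L^\phi(\Omega)$; more precisely, every $u \in W^{1,\phi}(\Omega)$ lies in $L^\phi(\Omega)$ and has a representative in $ACC^\phi(\Omega)$. Splicing this into the left end of the inclusion chain yields
\[
W^{1,\phi}(\Omega) = ACC^\phi(\Omega) \cap L^\phi(\Omega) \subset ACL^\phi(\Omega) \cap L^\phi(\Omega) \subset W^{1,\phi}(\Omega),
\]
which forces all three sets to coincide, and in particular $ACL^\phi(\Omega) \cap L^\phi(\Omega) = W^{1,\phi}(\Omega)$.

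There is essentially no obstacle here; the only point that needs care is the bookkeeping of representatives, which should be read exactly as in Theorem \ref{thm:Fuglede}. For the nontrivial direction $W^{1,\phi}(\Omega) \subset ACL^\phi(\Omega) \cap L^\phi(\Omega)$ one takes $u \in W^{1,\phi}(\Omega)$, uses Theorem \ref{thm:Fuglede} to obtain a representative $\tilde u \in ACC^\phi(\Omega) \cap L^\phi(\Omega)$, and then Lemma \ref{lem:ACCACL} shows $\tilde u \in ACL(\Omega)$; since $\nabla \tilde u \in L^\phi(\Omega)$ and $\tilde u \in L^\phi(\Omega)$, this gives $\tilde u \in ACL^\phi(\Omega) \cap L^\phi(\Omega)$. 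The reverse inclusion is Lemma \ref{lem:ACL} verbatim. I would present the whole argument in just a few lines.
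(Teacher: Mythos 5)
Your proposal is correct and matches the paper exactly: the corollary is obtained there by combining Theorem \ref{thm:Fuglede} with the inclusion chain from Lemmas \ref{lem:ACL} and \ref{lem:ACCACL}, which is precisely your argument. The sandwiching step and the bookkeeping of representatives are handled just as you describe.
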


As was noted at the end of Section \ref{sec:preliminaries}, $C^\infty(\Omega)$ functions are dense in $W^{1,\phi}(\Omega)$ if $\phi$ satisfies (A0), (A1), (A2) and (aDec).
By Remark \ref{rem:weakA0}, (A0) implies \eqref{def:weakA0}.
Thus Corollary \ref{cor:Fuglede} also holds with assumptions (A0), (A1), (A2) and (aDec), instead of \eqref{def:weakA0} and density of $C^1(\Omega)$-functions.


\end{document}